\newtheorem{theorem}{Theorem}[section]  
\newtheorem{proposition}[theorem]{Proposition}
\newtheorem{corollary}[theorem]{Corollary}
\theoremstyle{definition}
\newtheorem*{remark*}{Remark}
\newcommand{\Z}{\mathbb{Z}}
\newcommand{\CrossP}{
\begin{picture}(26,26)
\put(0,0){\vector(1,1){26}}
\put(10,16){\vector(-1,1){10}}
\put(26,0){\line(-1,1){10}}
\end{picture}  
}
\newcommand{\CrossN}{

\begin{picture}(26,26)
\put(0,0){\line(1,1){10}}
\put(16,16){\vector(1,1){10}}
\put(26,0){\vector(-1,1){26}}
\end{picture} 
}
\newcommand{\CrossO}{
\begin{picture}(26,26)
\qbezier(0,0)(14,12)(0,26)
\qbezier(26,0)(10,12)(26,26)
\put(25,25){\vector(1,1){1}}
\put(1,25){\vector(-1,1){1}}
\end{picture}
}
\begin{document}

\title[Gordian distance one and cosmetic crossing for genus one knots]{An obstruction of Gordian distance one and cosmetic crossings for genus one knots}
\author{Tetsuya Ito}
\address{Department of Mathematics, Kyoto University, Kyoto 606-8502, JAPAN}
\email{tetitoh@math.kyoto-u.ac.jp}

\begin{abstract}
We give an obstruction for genus one knots $K$, $K'$ to have the Gordian distance one by using the $0$th coefficient of the HOMFLT polynomials. As an application, we give a new constraint for genus one knot to admit a (generalized) cosmetic crossing. Combining known results, we prove the (generalized) cosmetic crossing conjecture for genus one pretzel knots.
\end{abstract}

\maketitle

\section{Introduction}

For oriented knots $K,K'$ in $S^{3}$, the \emph{Gordian distance} $d_G(K,K')$ of $K$ and $K'$ is the minimum number of crossing change needed to transform $K$ into $K'$. In particular, when $K'$ is the unknot $U$, $d_G(K,U)$ is nothing but the \emph {unknotting number}.

Let $P_K(v,z)$ be the HOMFLY polynomial of a knot or link $K$ in $S^{3}$, defined by the skein relation
\[ v^{-1} P_{K_+}(v,z) -  vP_{K_-}(v,z)= z P_{K_0}(v,z), \quad P_{\sf Unknot}(v,z) =1. \]
It is known that the HOMFLY polynomials are written in the form 
\[ P_K(v,z)=(v^{-1}z)^{-r_K + 1} \sum_{i=0} p^{i}_K(v)z^{2i}, \qquad p^{i}_K(v) \in \Z[v^2,v^{-2}]. \]
Here $r_K$ denotes the number of components of $K$. We call the polynomial $p^{i}_K(v)$ the \emph{$i$-th coefficient (HOMFLY) polynomial} of $K$.

The aim of this paper is point out the following obstruction for Gordian distance one for genus one knots. For a knot $K$, we denote by $a_2(K)$ the coefficient of $z^{2}$ for the Conway polynomial $\nabla_K(z)$ of $K$.
\begin{theorem}
\label{theorem:main}
Let $K$ and $K'$ be genus one knots. Assume that $K'$ is obtained from $K$ by the crossing change at a non-nugatory crossing with sign $\varepsilon = \pm 1$. 
Then 
\[ v^{-\varepsilon} p^0_K(v)- v^{\varepsilon}p^0_{K'}(v) = \varepsilon(v^{-1}-v)v^{2\varepsilon(a_2(K)-a_2(K'))}f(v)^{2} \]
for some $f(v) \in \Z[v^2,v^{-2}]$ such that $f(1)=1, f'(1)=0$.
\end{theorem}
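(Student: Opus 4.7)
The plan is to apply the HOMFLY skein relation at the crossing being changed and then analyze the resulting two-component link via the genus-one Seifert surface. Taking $v^{-1}P_{K_+} - vP_{K_-} = zP_{K_0}$ at the crossing and comparing the coefficient of $z^0$ on both sides (using $P_K = \sum_i p^i_K(v)z^{2i}$ for a knot and $P_L = vz^{-1}\sum_i p^i_L(v)z^{2i}$ for a two-component link) yields the uniform identity
\[ v^{-\varepsilon}p^0_K(v) - v^{\varepsilon}p^0_{K'}(v) = \varepsilon\, v\, p^0_L(v), \qquad L := K_0. \]
The theorem is thus equivalent to the assertion $p^0_L(v) = (v^{-2}-1)\,v^{2\varepsilon(a_2(K)-a_2(K'))}\,f(v)^2$ for a suitable $f$.

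Next, using the non-nugatory hypothesis together with the fact that $K'$ is also genus one, I would arrange a genus-one Seifert surface $F$ for $K$ so that the crossing arc lies on one of its bands. Writing $F$ as a disk with two bands and choosing a basis of $H_1(F)$, the Seifert matrix $V$ is a $2 \times 2$ integer matrix with $a_2(K) = \det V$, and the crossing change modifies $V$ by $V \mapsto V - \varepsilon E_{jj}$ on the band containing the crossing. Hence $\varepsilon(a_2(K)-a_2(K'))$ equals precisely the remaining diagonal entry $d$ of $V$. Smoothing the crossing deletes the affected band, so $L$ bounds the annulus $A = F \setminus (\text{band})$ and is isotopic to the two-component parallel of the core $\gamma$ of $A$ with linking number $d$.

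The main computation is then the cabling identity
\[ p^0_L(v) = (v^{-2}-1)\,v^{2d}\,p^0_\gamma(v)^2, \]
which I would establish by skein induction reducing to the base case $\gamma = U$. In that case $L$ is the $(2,2d)$-torus link, and a direct HOMFLY calculation gives $p^0_{T(2,2d)}(v) = (v^{-2}-1)\,v^{2d}$, matching the claim with $p^0_U = 1$. For the inductive step, one compares the HOMFLY skein relations for the parallel $L$ and the core $\gamma$ at the $z^0$-coefficient level and shows that the square $(p^0_\gamma)^2$ propagates. Setting $f(v) = p^0_\gamma(v)$ then completes the proof: $p^0_\gamma(1) = 1$ follows from $\nabla_\gamma(0) = 1$, and $(p^0_\gamma)'(1) = 0$ is obtained by differentiating the knot skein relation at $v=1,z=0$ together with the companion identity $(p^0_M)'(1) = -2$ for any two-component link $M$.

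The two main obstacles are, first, the geometric step of realizing the non-nugatory crossing on a minimum-genus Seifert surface; this is not automatic and will likely require a sutured-manifold argument or a careful exploitation of the joint genus-one hypothesis on both $K$ and $K'$. The second and more technical obstacle is the cabling identity itself: the appearance of $p^0_\gamma(v)^2$ as a genuine square, rather than as a product of two a priori different-looking factors, is the essential content that encodes geometrically the fact that $L$ consists of two parallel copies of $\gamma$, and verifying it cleanly will require tight control of $z$-degrees through the satellite skein, possibly via the skein-theoretic treatment of $2$-cables following Morton and Aiston.
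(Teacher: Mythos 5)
Your overall architecture coincides with the paper's: extract the $z^0$-coefficient of the HOMFLY skein relation at the changed crossing, show the oriented resolution $L=K_0$ bounds an annulus, and then identify $p^0_L$ with $(v^{-2}-1)v^{2\,lk(L)}f(v)^2$ where $f=p^0_\gamma$ for the core $\gamma$. Your skein identity $v^{-\varepsilon}p^0_K-v^{\varepsilon}p^0_{K'}=\varepsilon v\,p^0_L$ is correct, and your base-case computation for $T(2,2d)$ checks out. However, the first ``obstacle'' you flag is a genuine gap, and it is the crux of the theorem: you need to know that the non-nugatory crossing can be realized on a genus-one surface, equivalently that $L$ bounds an annulus. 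This does not follow from the genus-one hypothesis on $K$ alone; the paper derives it from the Scharlemann--Thompson / Kalfagianni--Lin theorem, which says that a minimal-genus surface for $K_+$ in the complement of the crossing circle has genus $\max\{g(K_+),g(K_-)\}=1$ (this is where the hypothesis that \emph{both} $K$ and $K'$ have genus one enters). One then isotopes the crossing disk to meet this surface in a single arc, which is essential precisely because the crossing is non-nugatory, and resolving along it produces a connected surface of Euler characteristic $0$ bounded by $K_0$. Your instinct that sutured-manifold theory is needed is right, but as written the step is asserted, not proved, and without it the theorem fails (e.g.\ the genus-one condition on $K'$ cannot be dropped).

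Your second ``obstacle'' is illusory. No satellite or cabling skein theory in the style of Morton--Aiston is needed: the zeroth coefficient polynomial of any $n$-component link satisfies $p^0_L(v)=(v^{-2}-1)^{n-1}v^{2\,lk(L)}\prod_i p^0_{K_i}(v)$, which follows directly from the $z^0$-skein relation $v^{-2}p^0_{L_+}-p^0_{L_-}=0$ at crossings between distinct components (so $p^0$ is merely multiplied by $v^{\pm 2}$ under such a change, and one unlinks and separates the components). Since the two boundary components of an embedded annulus are each isotopic to its core, the product is automatically the genuine square $p^0_\gamma(v)^2$; there is no ``a priori different-looking factors'' issue to resolve. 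Finally, your Seifert-matrix computation of $\varepsilon(a_2(K)-a_2(K'))=lk(L)$ is workable but convention-laden and again presupposes the unproved geometric realization; the paper gets the same identity convention-free from the Conway skein relation, since the $z^1$-coefficient of $\nabla_{K_0}$ for a two-component link is its linking number.
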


Here a crossing $c$ of a diagram $D$ is \emph{nugatory} if there is a circle $C$ on the projection plane that transverse to the diagram $D$ only at $c$.

Obviously, the crossing change at a nugatory crossing does not change the knot.
A \emph{cosmetic crossing} is a non-nugatory crossing $c$ such that the crossing change at $c$ does not change the knot.
The \emph{cosmetic crossing conjecture} \cite[Problem 1.58]{ki} asserts such a crossing never exists.

The cosmetic crossing conjecture for genus one knots was studied in \cite{bfkp} where they proved the conjecture under several assumptions, such as $K$ is algebraically non-slice. In \cite{it} we extended the result for the case the Alexander polynomial of $K$ is non-trivial, and gave some criterion that can be applied for the case the Alexander polynomial is trivial.  

However, even for the genus one pretzel knot, the simplest family of genus one knots, there are infinitely many cases where all the known criteria fail to rule our cosmetic crossings. 

By putting $K=K'$, Theorem \ref{theorem:main} immediately yields a constraint for genus one knot to admit a cosmetic crossing; when a genus one knot $K$ admits a cosmetic crossings, then $p^0_K(v)=f(v)^{2}$ for some $f(v) \in \Z[v^2,v^{-2}]$ satisfying $f(1)=1, f'(1)=0$.

As we will discuss in Section \ref{section:general}, it turns out that this criterion actually applies to \emph{generalized} cosmetic crossings (see Section \ref{section:general} for a definition of generalized cosmetic crossing).

\begin{theorem}
\label{theorem:g-cosmetic}
Let $K$ be a knot of genus one. If $K$ admits a generalized cosmetic crossing then $p^0_K(v)=f(v)^{2}$ for some $f(v) \in \Z[v^2,v^{-2}]$ satisfying $f(1)=1, f'(1)=0$. 
\end{theorem}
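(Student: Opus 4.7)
The plan is to reduce Theorem \ref{theorem:g-cosmetic} to an iterated application of Theorem \ref{theorem:main} along the twist family produced by the generalized cosmetic crossing. A generalized cosmetic crossing at a crossing disk $D$ with twist parameter $q \neq 0$ yields a cyclic sequence of knots $K = K_0, K_1, \ldots, K_q = K$, where $K_n$ is the result of inserting $n$ full twists at $D$. Consecutive members $K_{i+1}$ and $K_i$ are related by a single ordinary crossing change at a non-nugatory crossing: in a diagram for $K_{i+1}$, change the sign of one of the two new twist crossings and then apply a Reidemeister II move. All of these crossings have sign $\varepsilon = \mathrm{sign}(q)$ in the notation of Theorem \ref{theorem:main}.

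A preliminary geometric step is to verify that each intermediate knot $K_i$ has genus one, so that Theorem \ref{theorem:main} actually applies to consecutive pairs. The natural approach is to start with a genus one Seifert surface $\Sigma$ for $K_0 = K$ meeting $D$ in a minimal collection of arcs, and track what happens to $\Sigma$ as successive twists are inserted. The non-nugatory hypothesis on $\partial D$ together with the cyclic closure $K_q \cong K_0$ should provide enough rigidity to keep every $K_i$ of genus one.

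Granting this, applying Theorem \ref{theorem:main} to each pair $(K_{i+1}, K_i)$ yields, with $P_i = p^0_{K_i}$ and $\alpha_i = a_2(K_i)$,
\[ v^{-\varepsilon} P_{i+1} - v^{\varepsilon} P_i = \varepsilon(v^{-1}-v) v^{2\varepsilon(\alpha_{i+1}-\alpha_i)} f_i(v)^2. \]
Summing from $i = 0$ to $q-1$ and using $P_q = P_0$, the left side telescopes to $(v^{-\varepsilon} - v^\varepsilon)\sum_i P_i$, giving
\[ \sum_{i=0}^{q-1} P_i = \sum_{i=0}^{q-1} v^{2\varepsilon(\alpha_{i+1}-\alpha_i)} f_i(v)^2. \]

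The main obstacle is to deduce $p^0_K = f(v)^2$ from this identity: a sum of squares is not automatically a single square, so the telescoping alone is not enough. A plausible route is to also run the argument on every proper subfamily (starting the cycle at different positions), thereby producing enough independent identities to isolate $P_0$ inductively on $|q|$. An alternative is to exploit the linear recurrence that the HOMFLY polynomials $P_{K_n}$ satisfy along a twist family and solve directly for $P_0$ in terms of the $f_i$. It is also quite possible that the actual argument in Section \ref{section:general} bypasses the iteration entirely, giving instead a direct generalization of Theorem \ref{theorem:main} to the twist-family setting in which one induction handles the $q$-fold twist in one step; in any case, closing the gap between the telescoped sum and a single square is where the real work lies.
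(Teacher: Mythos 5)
Your setup is right, but the proposal stops exactly where the real content of the paper's proof lies, and the two missing ideas are concrete. First, the paper does not need the intermediate knots $K_i$ to have genus one, so the ``preliminary geometric step'' you worry about is a red herring: the only geometric input is that the oriented resolution $K_{i,0}$ of the $i$-th skein triple is \emph{the same link} $L$ for every $i$ (resolving the crossing destroys all the inserted twists), and that this single link $L$ bounds an annulus, which follows by running the argument of Proposition \ref{prop:skein-geometric} on the ends $K_0=K_q=K$ of the twist family. One then uses only the raw skein relation for $p^0$ on each triple, not Theorem \ref{theorem:main} itself, so all $q$ identities involve one and the same square $f(v)^2$ --- this is what dissolves your ``sum of squares is not a square'' obstacle. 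Second, your unweighted summation does not actually telescope: it produces $(v^{-\varepsilon}-v^{\varepsilon})\sum_i P_i$, which still contains all the unknown intermediate polynomials. The paper instead first shows $lk(L)=0$ by telescoping $a_2$ around the cycle ($0=a_2(K_0)-a_2(K_q)=q\,lk(L)$), and then multiplies the $i$-th skein identity by $v^{2i}$ before summing, so that the left-hand side collapses to
\[
v^{-2}p^0_{K_0}(v)-v^{2q-2}p^0_{K_q}(v)=(v^{-2}-v^{2q-2})\,p^0_K(v),
\]
while the right-hand side is $\sum_{i=0}^{q-1}v^{2i}(v^{-2}-1)f(v)^2=(v^{-2}-v^{2q-2})f(v)^2$. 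Dividing gives $p^0_K(v)=f(v)^2$. Without the observation that the resolved link is independent of $i$ and without the $v^{2i}$ weighting, the identity you derive is strictly weaker and does not yield the theorem, so as written the proposal has a genuine gap rather than an alternative route.
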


By using this obstruction we finally confirm the (generalized) cosmetic crossing conjecture for genus one pretzel knots. 

\begin{theorem}
\label{theorem:genus-one-pretzel}
A genus one pretzel knot does not admit a generalized cosmetic crossing.
\end{theorem}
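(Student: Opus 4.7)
The plan is to apply the obstruction of Theorem \ref{theorem:g-cosmetic}: for a genus one pretzel knot $K$ to admit a generalized cosmetic crossing, $p^0_K(v)$ must equal $f(v)^2$ for some $f(v)\in\Z[v^2,v^{-2}]$ with $f(1)=1$ and $f'(1)=0$. Thus it suffices to show that no genus one pretzel knot $K=P(p,q,r)$ (with $p,q,r$ all odd) has $p^0_K(v)$ of this form.

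First I would invoke the criteria of \cite{bfkp} and \cite{it} recalled in the introduction to dispose of the pretzel knots with non-trivial Alexander polynomial. For a genus one knot, $\Delta_K(t)$ is determined by a single integer $\alpha$ via $\Delta_K(t)=\alpha t-(2\alpha-1)+\alpha t^{-1}$, and for $K=P(p,q,r)$ one has $\det K=|pq+qr+rp|$, so triviality of $\Delta_K$ reduces to the explicit Diophantine condition $pq+qr+rp=\pm 1$. Only this residual infinite sub-family needs to be treated by the new obstruction.

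Next I would produce a usable formula for $p^0_{P(p,q,r)}(v)$. Applying the HOMFLY skein relation to a single crossing inside the $p$-twist region yields a two-term linear recursion
\[p^0_{P(p+2,q,r)}(v)\;=\;A(v)\,p^0_{P(p,q,r)}(v)\;+\;B(v)\,p^0_{L_{q,r}}(v),\]
whose inhomogeneous part is the $0$th coefficient of the oriented resolution, a $2$-component link $L_{q,r}$. Summing the resulting geometric series in $p$ and then iterating the same procedure in $q$ and $r$, and exploiting the symmetric role of the three twist regions, should yield an expression of the shape $p^0_{P(p,q,r)}(v)=1+(v-v^{-1})^2\,h_{p,q,r}(v)$ with $h_{p,q,r}\in\Z[v^2,v^{-2}]$ depending polynomially on $p,q,r$.

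The final step is to check that for $pq+qr+rp=\pm 1$ this polynomial is never of the form $f(v)^2$ with $f(1)=1$ and $f'(1)=0$. The derivative condition forces $f(v)^2=1+O((v-v^{-1})^4)$ near $v=1$, so the first test is whether $h_{p,q,r}(1)\neq 0$; if this vanishes one compares the next Taylor coefficient at $v=1$, and as a last resort evaluates at a root of unity such as $v=\sqrt{-1}$ (reducing the HOMFLY polynomial to a Jones-type invariant whose values are easy to compare). I expect the main obstacle to be the previous step: one needs $h_{p,q,r}(v)$ in a form explicit enough that the squareness test can be run uniformly over the entire parametric family left after the reduction to $\Delta_K=1$, and pretzel skein recursions tend to produce expressions whose symmetry in $p,q,r$ is not manifest before considerable simplification.
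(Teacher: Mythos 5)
Your overall strategy matches the paper's: reduce to the case of trivial Alexander polynomial using \cite{it}, compute $p^0_{P(p,q,r)}(v)$, and show it is not a square. However, the final and decisive step --- actually proving non-squareness --- has a genuine gap. The tests you propose at $v=1$ cannot work: \emph{every} zeroth coefficient polynomial of a knot satisfies $p^0_K(1)=1$ and $(p^0_K)'(1)=0$, and the hypothesized square $f(v)^2$ with $f(1)=1$, $f'(1)=0$ satisfies the same two conditions, so the value and first derivative at $v=1$ carry no information; you give no reason why the second Taylor coefficient or an evaluation at $v=\sqrt{-1}$ would distinguish the two (and for the residual family with $a_2(K)=0$ the determinant is $1$, so root-of-unity evaluations are particularly unlikely to help). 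The paper's actual argument is a parity argument on an explicit closed formula, quoted from Takioka: after normalizing to $p\geq q>1>-1>r$ one has
\[ p^0_K(v)= v^{p+q}-v^{p+q+r+1}-v^{p+q+r-1}+v^{p+r}+v^{q+r}, \qquad p+q > p+q+r+1 > p+q+r-1 > p+r \geq q+r, \]
so the top coefficient is $1$ and the second-highest coefficient is $-1$; but in any square $f(v)^2$ of a Laurent polynomial the coefficient immediately below the leading term is twice a product of coefficients of $f$, hence even. This contradiction is what closes the proof, and nothing in your proposal substitutes for it. Two smaller points: the triviality of $\Delta_K$ for a genus one knot is equivalent to $a_2(K)=\frac{pq+qr+rp+1}{4}=0$, i.e.\ $pq+qr+rp=-1$ exactly (not $\pm1$); and your skein-recursion derivation of $p^0_{P(p,q,r)}$ is plausible but left entirely schematic, whereas an explicit formula is indispensable for the parity argument to run.
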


\section{Gordian distance one obstruction}

\subsection{Crossing change between two genus one knots}
\label{section:skein}
First we review a geometric content of crossing change, following \cite{st,kl}.

A \emph{crossing disk} $D$ for an oriented knot $K$ is an embedded disk such that whose interior intersects $K$ at exactly two points with opposite signs.
By suitably taking a crossing disk $D$, a crossing change at a crossing $c$ can be seen as $\varepsilon = \pm 1$ Dehn surgery on $\partial D$. Moreover, the crossing $c$ is nugatory if and only if $\partial D$ bounds a disk in the knot complement $S^{3}\setminus K$.

Let $K_+,K_-,K_0$ be the knots (or, diagrams) which are the same out side a small 3-ball, and in the small ball $K_+,K_-,K_{0}$ are $+1,-1,0$-tangle oriented as follows.\\
\[ \begin{array}{ccccccc}
\CrossP & & \CrossN & & \CrossO \\
K_+ &  &K_- & & K_0 & & \\
\end{array}\]
We call $(K_+,K_-,K_0)$ a \emph{skein triple} and we say that a skein triple is \emph{non-trivial} if the crossing in the 3-ball is non-nugatory.

For a link $L$, let $\chi(L)$ be the maximum euler characteristic of Seifert surface of $L$. Here we allow a non-connected Seifert surface, but we do not allow to have a closed components.

A key geometric ingredient of the proof of Theorem \ref{theorem:main} is the following proposition, which is a reformulation of \cite[Theorem 1.4]{st}, \cite[Theorem 2.1]{kl} (see also \cite[Proposition 2.2]{bfkp})
\begin{proposition}
\label{prop:skein-geometric}
Let $(K_{+},K_{-},K_0)$ be a non-trivial skein triple and assume that $K_+$ and $K_-$ are genus one knots. Then $K_0$ bounds an annulus.
\end{proposition}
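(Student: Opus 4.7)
The plan is to combine a minimal Seifert surface argument with the oriented $0$-smoothing. Start by choosing a genus one Seifert surface $\Sigma_+$ for $K_+$, so $\chi(\Sigma_+) = -1$. Standard cut-and-paste inside the crossing disk $D$ --- innermost disks to remove circles of intersection in $\Sigma_+ \cap D$, and outermost arcs to remove arcs of $\Sigma_+ \cap D$ with both endpoints on $\partial D$ --- together with the minimality of genus of $\Sigma_+$, reduces $\Sigma_+ \cap D$ to a single essential arc $\alpha$ joining the two points of $K_+ \cap D$.

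Next, the oriented $0$-smoothing of $K_+$ at the crossing corresponds topologically to cutting $\Sigma_+$ along a small regular neighborhood of $\alpha$. This produces a Seifert surface $\Sigma_0$ for $K_0$ with $\chi(\Sigma_0) = \chi(\Sigma_+) + 1 = 0$, whose boundary $K_0$ has two components. If $\alpha$ is non-separating in $\Sigma_+$, then $\Sigma_0$ is connected, orientable, with two boundary circles and Euler characteristic zero, hence an annulus; this is exactly the desired conclusion.

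The remaining case is that $\alpha$ is separating in $\Sigma_+$, so $\Sigma_0$ decomposes as a disjoint union of a disk $D_1$ and a once-punctured torus $F$, giving $K_0 = \partial D_1 \sqcup \partial F$. I would then repeat the same construction with a genus one Seifert surface $\Sigma_-$ for $K_-$ (noting that $K_0$ is also the $0$-resolution of $K_-$): if the corresponding arc $\alpha'$ is non-separating, we obtain the desired annulus directly. Otherwise we have two candidate decompositions $\Sigma_0 = D_1 \sqcup F$ and $\Sigma_0' = D_1' \sqcup F'$, both arising as Seifert surfaces for the link $K_0$, and my plan is to derive a contradiction with the non-nugatory hypothesis. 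Concretely, I would assemble $D_1$, $D_1'$, and a suitably chosen half of the crossing disk $D$ into an embedded $2$-sphere in $S^3 \setminus K_+$ that exhibits $\partial D$ as bounding a disk in the knot complement, contradicting non-nugatoriness.

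The main obstacle lies in this last step. Two things require care: first, arranging $D_1$ and $D_1'$ to be in general position with controlled intersection (so that the assembly yields an embedded, not merely immersed, sphere); second, tracking how the boundary arcs of these disks match along $\partial D$ so that the resulting sphere actually gives a compressing disk for $\partial D$ in the complement of $K_+$. It is precisely here that both genus one hypotheses on $K_+$ and $K_-$ are essential --- one genus bound alone does not give the two-sided control needed to force nugatoriness in the symmetric separating case. This is the substantive geometric content underlying the reformulations of Scharlemann--Thompson and Kalfagianni--Lin cited in the excerpt.
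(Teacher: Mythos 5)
There is a genuine gap, and it sits in your very first step. The innermost-circle and outermost-arc moves you describe dispose of closed curves of $\Sigma_+\cap D$ and of arcs with both endpoints on $\partial D$, but they do nothing about arcs of $\Sigma_+\cap D$ running from one of the two points of $K_+\cap \mathrm{int}(D)$ out to $\partial D$, and such arcs cannot in general be removed while keeping $\Sigma_+$ of genus one. Indeed, a Seifert surface for $K_+$ meeting $D$ in a single arc (hence disjoint from $\partial D$) becomes, after the $\mp 1$ surgery on $\partial D$, a Seifert surface of the same genus for $K_-$; so if $g(K_-)>g(K_+)$ \emph{no} minimal genus Seifert surface for $K_+$ can be so positioned. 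This exposes the structural problem: your argument reaches the conclusion ``$\Sigma_0$ is an annulus'' in the non-separating case using only $g(K_+)=1$, with $g(K_-)=1$ entering only in the final step you admit you cannot complete, yet the proposition is false without the hypothesis on $K_-$. The paper avoids this by minimizing genus among surfaces for $K_+$ lying in the complement of $\partial D$ and then invoking Kalfagianni--Lin, Theorem 2.1 (in essence Scharlemann--Thompson), which says this \emph{constrained} minimal genus equals $\max\{g(K_+),g(K_-)\}=1$. That theorem, proved with sutured manifold theory, is the substantive input; it is not recoverable by cut-and-paste from an arbitrary minimal genus Seifert surface, and once the surface is chosen in $S^3\setminus\partial D$ the single arc of intersection with $D$ comes for free (all arcs must end on the two points of $K_+\cap D$).

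Your treatment of the separating case is also misdirected. A properly embedded separating arc in a once-punctured torus cuts off a disk and is therefore boundary-parallel, i.e.\ inessential; equivalently, every essential arc in a once-punctured torus is non-separating. So if you have genuinely established that $\alpha$ is essential, your ``remaining case'' is vacuous and $\Sigma_0$ is automatically connected with $\chi=0$ and two boundary circles, hence an annulus. What actually requires proof is that non-nugatoriness forces $\alpha$ to be essential: if $\alpha$ cut off a disk $\Delta\subset S$, then $\Delta$ together with one of the two half-disks of $D\setminus\alpha$ can be pushed off $K_+$ to exhibit a disk bounded by $\partial D$ in $S^3\setminus K_+$, making the crossing nugatory. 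This is a one-surface argument; the two-surface sphere assembly from $D_1$, $D_1'$ and half of $D$ is not needed, and (as you yourself flag) making it embedded and tracking the boundary identifications would be troublesome.
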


\begin{proof}
Let $(K_+,K_-,K_0)$ be a non-trivial skein triple, and let $D$ be the crossing disk for $K_+$.
Since the linking number of $\partial D$ and $K_+$ is zero, $K_+$ bounds a surface $S$ in the complement $S^{3} \setminus \partial D$. We take a such a surface so that its genus is minimum. By \cite[Theorem 2.1]{kl}, $g(K_0) \leq g(S)=\max\{g(K_+),g(K_-)\}$. Therefore $g(S)=1$.

We put $D$ so that the intersection $\alpha:=D \cap S$ is a single arc. Since $c$ is assumed to be non-nugatory, $\alpha$ is essential in $S$. Thus by resolving the crossing $c$, we get a connected Seifert surface $S_0$ of $K_0$ such that $\chi(S_0)=\chi(S)+1=0$. Hence $K_0$ bounds an annulus $S_0$. 
\end{proof}

\subsection{Zeroth coefficient of the HOMFLY polynomial}
\label{section:homfly}
Let $\delta=\frac{1}{2}(r_{K_+} - r_{K_0}+ 1) \in \{0,1\}$. Namely, we define $\delta=0$ if two strands in the skein triple belongs to the same components, and $\delta=1$ otherwise.
From the skein relation, the zeroth coefficient polynomial of $p^{0}_K(v)$  satisfies the following simple but remarkable skein relations.
\[ v^{-2} p^0_{K_+}(v) - p^{0}_{K_-}(v)= 
\begin{cases} p^0_{K_0}(v) & \mbox{ if } \delta=0, \\
0 & \mbox{ otherwise}. 
\end{cases} \] 

Consequently, the zeroth coefficient polynomial $p^{0}_{L}(v)$ of a link $L$ is determined by the zeroth coefficient polynomial $p^{0}_{K_i}(v)$ of each component $K_i$ and the total linking number $lk(L)=\sum_{i<j}lk(K_i,K_j)$.

\begin{proposition}
Let $L=K_1 \cup \cdots \cup K_n$ be an $n$-component link. Then 
\[ p^{0}_{L}(v)= (v^{-2}-1)^{n-1}v^{2lk(L)} p^{0}_{K_1}(v)p^{0}_{K_2}(v)\cdots p^{0}_{K_n}(v)\]
\end{proposition}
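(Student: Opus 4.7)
The plan is to identify a quantity built from $p^0_L(v)$ that is invariant under crossing changes between different components of $L$, and then evaluate it on the split union $K_1 \sqcup \cdots \sqcup K_n$. The key observation is that when a skein triple $(K_+, K_-, K_0)$ involves two distinct components of the link (so $\delta=1$), the skein relation of Section \ref{section:homfly} reduces to $p^0_{K_+}(v) = v^2 p^0_{K_-}(v)$, with $K_0$ absent.

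First I would check the invariance: changing a positive crossing between two different components to a negative one decreases the total linking number $lk(L)$ by $1$ and, by the relation above, multiplies $p^0_L(v)$ by $v^{-2}$, so the product $v^{-2lk(L)} p^0_L(v)$ is unchanged. Since such crossing changes also leave the individual components $K_i$ intact, the ratio $v^{-2lk(L)} p^0_L(v) / \prod_i p^0_{K_i}(v)$ depends only on the unordered tuple $(K_1,\ldots,K_n)$ and not on how the components are linked.

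Next I would invoke the standard fact that any link can be transformed into its split union $K_1 \sqcup \cdots \sqcup K_n$ by finitely many crossing changes between different components, reducing the problem to verifying the formula when $L$ is split, in which case $lk(L)=0$. For this step I would appeal to the multiplicative property
\[ P_{L_1 \sqcup L_2}(v,z) = \tfrac{v^{-1}-v}{z}\, P_{L_1}(v,z)\, P_{L_2}(v,z) \]
of the HOMFLY polynomial and, using the normalization $P_K(v,z)=(v^{-1}z)^{-r_K+1}\sum_i p^i_K(v)z^{2i}$, extract the $z^0$ coefficient to get $p^0_{L_1 \sqcup L_2}(v) = (v^{-2}-1)\, p^0_{L_1}(v)\, p^0_{L_2}(v)$. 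Iterating this $n-1$ times yields $p^0_{K_1 \sqcup \cdots \sqcup K_n}(v) = (v^{-2}-1)^{n-1} \prod_i p^0_{K_i}(v)$, which combined with the invariance above gives the claim.

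The only point requiring care, rather than a real obstacle, is the bookkeeping of sign conventions: confirming that a $K_+ \to K_-$ change between distinct components lowers $lk(L)$ by exactly $1$, so that the correct invariant is $v^{-2lk(L)} p^0_L(v)$ rather than its reciprocal. The rest of the argument is mechanical.
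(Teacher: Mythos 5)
Your proof is correct and is essentially the argument the paper intends: the paper derives the proposition from the $\delta=1$ case of the skein relation $v^{-2}p^0_{K_+}=p^0_{K_-}$, which shows $v^{-2lk(L)}p^0_L(v)$ is invariant under inter-component crossing changes, reducing to the split union where the multiplicativity of the HOMFLY polynomial gives the factor $(v^{-2}-1)^{n-1}\prod_i p^0_{K_i}(v)$. Your sign bookkeeping (a positive-to-negative change between distinct components lowers $lk(L)$ by $1$ and multiplies $p^0_L$ by $v^{-2}$) and the extraction of the $z^0$ coefficient from the split-union formula both check out.
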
 

The zeroth coefficient polynomial $p^{0}_K(v)$ of a knot $K$ satisfies $p^{0}_K(1)=1$ and $(p^{0}_K)'(1)=0$. Conversely, every $f(v) \in \Z[v^{2},v^{-2}]$ satisfying $f(1)=1$ and $f'(1)=0$ is realized as the zeroth coefficient polynomial of some knots \cite{ka}.

Summarizing, when a two-component link $K$ bounds an annulus, the zeroth coefficient polynomial $p^{0}_K(v)$ takes a special form.
\begin{corollary}
\label{cor:annulus}
Let $L$ be a two-component link that bounds an annulus.
Then $p^{0}_K(v)=(v^{-2}-1)v^{2lk(L)}f(v)^2$ for some $f(v) \in \Z[v^2,v^{-2}]$ satisfying $f(1)=1$ and $f'(1)=0$.
\end{corollary}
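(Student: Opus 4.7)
The plan is to combine the multi-component product formula from the proposition immediately above with the geometric consequence of bounding an annulus. Applying that proposition to the two-component link $L = K_1 \cup K_2$ gives
\[ p^0_L(v) = (v^{-2}-1)\,v^{2\,lk(L)}\, p^0_{K_1}(v)\, p^0_{K_2}(v), \]
so the whole content of the corollary is to rewrite the product $p^0_{K_1}(v)\,p^0_{K_2}(v)$ as a single square $f(v)^2$ with $f(1)=1$ and $f'(1)=0$.

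Next I would use the hypothesis that $L$ bounds an embedded annulus $A \subset S^{3}$. Parameterizing $A$ as $S^{1}\times [0,1]$ with $K_i = S^{1}\times\{i-1\}$, the obvious deformation of $A$ onto one boundary circle provides an isotopy of embeddings $S^{1}\hookrightarrow S^{3}$ carrying $K_2$ to $K_1$; by the isotopy extension theorem this lifts to an ambient isotopy of $S^{3}$. Hence $K_1$ and $K_2$ represent the same unoriented knot. The only subtle point is orientations: an orientation of $A$ induces orientations on the two boundary components, which may or may not agree with the given orientations of $K_1,K_2$, so a priori the ambient isotopy might only identify $K_1$ with $-K_2$.

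This potential mismatch is harmless because the HOMFLY polynomial of a single-component link is invariant under reversal of its orientation, and therefore so is $p^{0}$. Consequently $p^0_{K_1}(v) = p^0_{K_2}(v)$, and we may set $f(v):=p^0_{K_1}(v)\in \Z[v^2,v^{-2}]$. The normalization facts for the zeroth coefficient polynomial of a knot recorded in Section \ref{section:homfly} give $f(1)=1$ and $f'(1)=0$, and substitution into the displayed formula above yields the claimed identity. No individual step is especially difficult; the mild conceptual hurdle is recognizing that the orientation ambiguity produced by the annulus is invisible at the level of $p^{0}$ of a knot.
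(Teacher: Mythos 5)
Your proposal is correct and follows exactly the route the paper intends (the paper leaves the proof implicit as a "summary" of Section \ref{section:homfly}): apply the product formula for $p^0_L$ and observe that the annulus makes the two components isotopic knots, so $p^0_{K_1}=p^0_{K_2}=:f$. Your extra care about the orientation ambiguity, resolved by the invariance of the HOMFLY polynomial of a knot under orientation reversal, is a worthwhile detail the paper does not spell out.
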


\subsection{Proof of Theorem \ref{theorem:main}}

Theorem \ref{theorem:main} is obtained by combining the arguments in Section \ref{section:skein} and \ref{section:homfly}.

\begin{proof}[Proof of Theorem \ref{theorem:main}]
We prove the theorem for the case $\varepsilon=+1$. The case $\varepsilon=-1$ is similar.
Let $(K_+=K,K_-=K',K_0)$ be the non-trivial skein triple.
By Proposition \ref{prop:skein-geometric}, $K_0$ bounds an annulus hence by Corollary \ref{cor:annulus}, $p^{0}_{K_0}(v)=(v^{-2}-1)v^{2lk(K_0)}f(v)^{2}$ for some $f(v) \in \Z[v^2,v^{-2}]$ satisfying $f(1)=1$ and $f'(1)=0$.

By the skein relation of the Conway polynomial, $lk(K_0)=a_2(K_+)-a_2(K_-)$. Hence 
 by the skein relation of the $0$-th coefficient polynomial we conclude that
\[ v^{-2}p^{0}_{K_+}(v)- p^0_{K_-}(v) = (v^{-2}-1)v^{2(a_2(K_+)-a_2(K_-))}f(v)^2.\]

\end{proof}

\section{Generalized cosmetic crossings}
\label{section:general}

A generalized crossing change (of degree $q$) is the $\frac{1}{q}$ surgery ($q \in \Z \setminus \{0\}$) on a crossing disk (note that $q=\pm 1$ is the usual crossing change). A \emph{generalized cosmetic crossing} is a non-nugatory crossing such that a generalized crossing change at the crossing yields the same knot.

As a natural generalization of the cosmetic crossing conjecture, it is conjectured that no knot admits a generalized cosmetic crossings. Indeed, in most literatures on cosmetic crossings including early ones \cite{to,ka} and recent ones \cite{ro,wa}, it is actually discussed and studied generalized cosmetic crossings and `cosmetic surgery conjecture' is often used to represent this stronger version.

As long as the author knows, all the assertions concerning cosmetic crossings actually can be applied for generalized cosmetic crossings, with some modifications or additional arguments, if necessary.

For example, in the author's previous paper \cite{it} or \cite{bfkp} on genus one knots, they only explicitly stated and discussed usual cosmetic crossings. However, one can check that a similar argument actually shows the non-existence of generalized cosmetic crossings. In particular, it is actually proved that a genus one knot with non-trivial Alexander polynomial does not admit cosmetic crossings.

The same is true for the condition $p^{0}_K(v)=f(v)^{2}$, even though it appeared as a special case of the Gordian distance one obstruction.

\begin{proof}[Proof of Theorem \ref{theorem:g-cosmetic}]
Assume that generalized crossing change of degree $q$ at a non-nugatory crossing $c$ produces the same knot. We treat the case $q>0$. For each $i=0,1,\ldots,q-1$, let $K_{i}$ be the knot obtained by the generalized crossing change of degree $i$ at the crossing $c$ (here $K_0=K$), and let $(K_{i},K_{i+1},K_{i,0})$ be the corresponding skein triple. By definition, $K_{0,0}=K_{1,0}=\cdots = K_{q-1,0}$, so we put this link $L$. The same argument as Proposition \ref{prop:skein-geometric} shows that $L$ bounds an annulus so $p^{0}_{L}(v)=(v^{-2}-1)v^{2lk(L)}f(v)^{2}$ for some $f(v) \in \Z[v^2,v^{-2}]$ satisfying $f(1)=1$ and $f'(1)=0$. Since
\begin{align*}
0 &= a_{2}(K_0) -a_{2}(K_q) = \sum_{i=0}^{q-1}(a_{2}(K_i)-a_{2}(K_{i+1})) = \sum_{i=0}^{q-1}lk(K_{L}) = q lk(L)
\end{align*}
$lk(L)=0$. Then 
\begin{align*}
(v^{-2}-v^{2q-2})p^{0}_{K}(v)& = v^{-2}p^{0}_{K_0}(v) - v^{2q-2}p^{0}_{K_{q}}(v)\\
& = \sum_{i=0}^{q-1} v^{2i}(v^{-2}p^{0}_{K_i}(v)  - p^{0}_{K_{i+1}}(v))\\
& = \sum_{i=0}^{q-1} v^{2i}p^{0}_{L}(v) = \sum_{i=0}^{q-1} v^{2i}(v^{-2}-1)f(v)^{2}\\
& = (v^{-2}-v^{2q-2})f(v)^{2}
\end{align*}
hence $p^{0}_{K}(v)=f(v)^{2}$.
\end{proof}

\begin{proof}[Proof of Theorem \ref{theorem:genus-one-pretzel}]
Let $K=P(p,q,r)$ be a genus one pretzel knot where $p,q,r$ are odd integers.
Then $a_{2}(K)=\frac{pq+qr+pr+1}{4}$. 

As we have mentioned, since we have proven the generalized cosmetic crossing conjecture when the Alexander polynomial of $K$ is non-trivial \cite{it}, we assume that $a_2(K) = 0$. By taking the mirror image if necessary, so it is sufficient to treat the case $p\geq q >0 >r$.
Note that $r\neq -1$ because, when $r=-1$, $a_2(K)=\frac{pq+qr+pr+1}{4}=0$ implies $q=1$ so $K$ is the trivial knot. By a similar reason, $q\neq 1$ as well. 

The zeroth coefficient polynomial of $K=P(p,q,r)$ is computed from the skein relation, and given by
\[ p^0_K(v)= v^{p+q}-v^{p+q+r+1}-v^{p+q+r-1}+v^{p+r}+v^{q+r} \]
(see \cite[Proposition 2.2 (i)]{ta}, for example).
Since $p\geq q >1 > -1 > r$ 
\[ p+q > p+q+r+1 > p+q+r-1 > p+r \geq q+r \]
If $p^{0}_K(v)=f(v)^{2}$, then the coefficient of $v^{p+q+r+1}$ should be even so $p^0_K(v)$ is not the square of other polynomials.
Therefore by Theorem \ref{theorem:g-cosmetic} $K=P(p,q,r)$ does not admit a generalized cosmetic crossing.
\end{proof}

\section*{Acknowledgement}
The author has been partially supported by JSPS KAKENHI Grant Number 19K03490 and 21H04428.  
He would like to thank H. Takioka for useful conversations, in particular, for 
informing me of his computation of the zeroth coefficient polynomial of genus one pretzel knots.

\end{document}